\begin{document}
\bibliographystyle{alpha}
\newtheorem{lemma}{Lemma}[section]
\newtheorem{lemm}[lemma]{Lemma}
\newtheorem{prop}[lemma]{Proposition}
\newtheorem{coro}[lemma]{Corollary}
\newtheorem{theo}[lemma]{Theorem}
\newtheorem{conj}[lemma]{Conjecture}
\newtheorem{prob}{Problem}
\newtheorem{ques}{Question}

\newtheorem{rema}[lemma]{Remark}
\newtheorem{rems}[lemma]{Remarks}
\newtheorem{defi}[lemma]{Definition}
\newtheorem{defis}[lemma]{Definitions}
\newtheorem{exam}[lemma]{Example}

\newcommand{\N}{\mathbf N}
\newcommand{\Z}{\mathbf Z}
\newcommand{\R}{\mathbf R}
\newcommand{\Q}{\mathbf Q}
\newcommand{\C}{\mathbf C}

\title{ Schur's orthogonality relations for the free group}
\begin{abstract}
We prove an explicit convergence of suitably normalized integrals on balls where the integrand is the product of coefficients of the quasi-regular representation of the finitely generated free group. This result follows from the fact that the quasi-regular representation of the free group is c-tempered in the sens of Kazhdan and Yom Din \cite[ def 2.1]{kazhdan2022tempered}.
The convergence can be summed up by the formula:
\begin{equation}\nonumber
     \underset{n \to \infty}{\lim}\frac{1}{n^{3}}\int_{B_n}\langle\pi(g)\psi_1,\psi_2\rangle\overline{\langle\pi(g)\psi_3,\psi_4\rangle}d\mu(g) = \frac{3q(q+1)}{(q-1)^2}\langle\psi_1,\psi_3\rangle\overline{\langle\psi_2,\psi_4\rangle}  
\end{equation}
where $q+1$ is twice the rank of the free group, $B_n$ is the ball of radius $n$ with respect to the canonical word metric and $\psi_1,\psi_2,\psi_3,\psi_4$ are square summable functions on the boundary of the free group.
\end{abstract}

\keywords{Free Group, Unitary Representation, Quasi-Regular Representation, Schur's Orthogonality Relations, C-Tempered, Harish-Chandra Function.}

\author{Guillaume Delord}
\email{guillaume.delord.1@etu.univ-amu.fr}
\date{November 2024}
\thanks{Thanks to Ch. Pittet, A. Boyer and A. Pinochet Lobos for their great help and generosity}

\maketitle

\tableofcontents

\section{Introduction.}\label{Introduction}
Given an irreducible unitary representation of a compact group $\rho:G\to\mathcal{U}(\mathcal{V})$, we know (see for instance \cite{bendikov2023braidingasymptoticschursorthogonality}) that the Hilbert space $\mathcal{V}$ is finite dimensional (say $d:=dim_{\C}(\mathcal{V})$) and, for all $v_1,v_2,v_3$ and $v_4$ in $\mathcal{V}$, we have:
\begin{equation}\nonumber
\int_{G}\langle\rho(g)v_1,v_2\rangle\overline{\langle\rho(g)v_3,v_4\rangle}dg=\frac{1}{d}\langle v_1,v_3\rangle\overline{\langle v_2,v_4\rangle}
\end{equation}
where $dg$ denotes the normalised Haar measure on $G$.\\
This formula (Schur's orthogonality relations) can be seen as a generalisation of the fact that characters of finite groups are unitary in $l^2(G,\C)$ but does not make sense when $G$ is not compact and the coefficients not square summable.\\
In this paper, we compute an equivalent asymptotic formula for the boundary representation of the free group.\\
In \cite[First theorem]{boyer2016asymptotic}, one can find similar results for Gromov hyperbolic groups when the metric is non arithmetic .
\subsection{Settings and notations.}
Let $G$ be the free group with $N$ generators, $X$ the Cayley graph associated to right multiplication in $G$ and $x_0$ a base point in $X$. Then $X$ is an homogeneous tree of degree $2N=:q+1$ equipped with the unique distance $d$ which gives the value 1 to any pair of adjacent vertices (see figure \ref{fig:CayleyGraph} for an example where $N=2$ and $x_0=e$ the neutral element of $G$). We denote by $[x,y]$ the unique geodesic joining $x$ to $y$ in $X$, $S_k:=S(x_0,k):=\{x\in X||x|:=d(x_0,x)=k\}$ the sphere centered at $x_0$ with radius $k \in \N$ (for an element $g$ of $G$, we write also $g \in S_k$ whenever $gx_0 \in S(x_0,k)$ as an element of $X$). We also denote by $B_n:=B(x_0,n)$ the ball with radius $n \in \N$.\\
\begin{figure}
    \centering
    \includegraphics[width=0.5\linewidth]{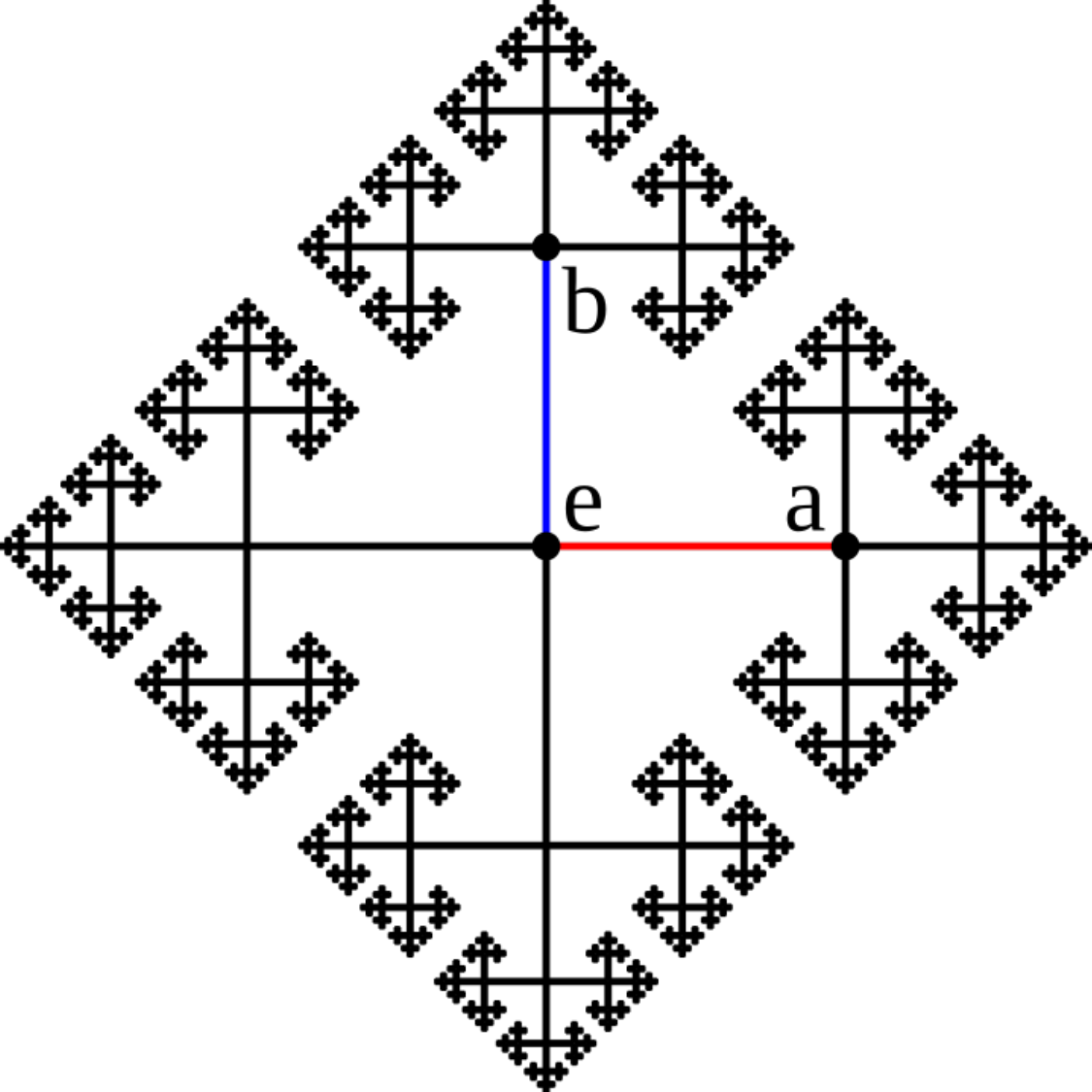}
    \caption{Example for $N=2$ and $x_0=e$}
    \label{fig:CayleyGraph}
\end{figure}
A point $\omega$ of the boundary $\Omega$ can be seen as a direction to infinity or, more precisely, as an infinite geodesics $[x_0,\omega)$ starting at $x_0$. We equip $\Omega$ with a topological structure declaring its basis of open sets to be all the shadows $\Omega_x=\{\omega \in \Omega| [x_0,x] \subset [x_0,\omega)\}$, where $x$ is in $X$, which makes $\Omega$ a compact topological space (for more details, see the introduction of \cite{kuhn1992boundary} where Kuhn embeds $X \bigsqcup \Omega$ in  a  cartesian  product  of  compact spaces).\\
We also equip $\Omega$ with a Borel probability measure $\nu$ which satisfy: 
\begin{equation}\nonumber
    \forall x \in X, \nu(\Omega_x) = \frac{1}{|S_{|x|}|}=\frac{1}{(q+1)q^{|x|-1}}.
\end{equation}
The isometric left action of $G$ on $X$ clearly extends to a left action on $\Omega$ and one can show that $\nu$ is quasi-invariant under this action (see \cite[ Corollary 2.6.3]{1573387449062997504} where Bourdon shows that $G$ acts by conformal maps on $\Omega$).\\
In particular, for all $g$ in $G$, $g_*\nu << \nu$. One can show that $\frac{dg_*\nu}{d\nu}(\omega)=P(g^{-1},\omega):=q^{\beta_{\omega}(x_0,gx_0)}$ where $\beta$ is the Busemann function (see also \cite{1573387449062997504}).\\
Let $\mathcal{H}$ be the Hilbert space $L^2(\Omega,\nu,\C)$. We define the unitary representation $\pi:G \to \mathcal{U}(\mathcal{H})$ by:
\begin{equation}\nonumber
    \forall g \in G, \psi \in \mathcal{H}, \omega \in \Omega, \pi(g)\psi(\omega) := P(g^{-1},\omega)^{\frac{1}{2}}\psi(g^{-1}\omega).
\end{equation}
It is well known $\pi$ is irreducible (see for example \cite[thm 1.2]{boyer2017ergodic}). In particular, by Schur's lemma:
\begin{equation}\label{intertwiners of pi}
    Hom_G(\pi,\pi)=\{T \in \mathcal{B}(\mathcal{H}) \text{ such that } T \circ \pi(g) =\pi(g) \circ T, \forall g \in G\}=\C Id_{\mathcal{H}}.
\end{equation}

\subsection{Organisation of the paper.}
In section \ref{computing H Ch} we compute the values of the Harish-Chandra function and prove that it is spherical using partitions of the boundary where the Busemann function is constant. In section \ref{pi c-temepered}, we show (using results from \cite{boyer2017ergodic}) that the representation is c-tempered in the sense of \cite{kazhdan2022tempered} (using results from \cite{kuhn1994amenable} and \cite{haagerup1978example} about the extension of our representation and the regular one). To conclude, in section \ref{Asymptotic Schur's orthogonality relations}, we adapt and detail the proof one can find in \cite{kazhdan2022tempered} to obtain our asymptotic orthogonality relations by showing some intermediate results based on functional analysis from \cite{reed1981functional} and explain how we obtain our initially mentioned formula.

\section{Computing the Harish-Chandra function.}\label{computing H Ch}
\subsection{Partitions where the Busemann function is constant.}
\begin{lemma} \label{partition of finite geodesic}
    $\textit{Let } x \in S_n \textit{ such that } [x_0,x] = (x_0,x_1,\dots, x=x_n)$.
    One defines the following sets:
    \begin{equation} \nonumber
        E_k(x):=\begin{cases}
            \Omega_x  &\textit{ if } k=n\\
            \Omega_{x_k} -\Omega_{x_{k+1}}  &\textit{ if } 0 \leq k<n 
        \end{cases}
    \end{equation} 
Then $\{E_0(x),\dots, E_n(x)\}$ is a partition of $\Omega$ (see Figure \ref{Dessin2} for an example where $n=2$).\\
Moreover, $\omega \mapsto\beta_{\omega}(x_0,x)$ is constantly equal to $2k - n$ on $E_k(x)$.
\end{lemma}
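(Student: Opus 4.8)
The plan is to treat the two assertions separately: the set-theoretic partition is essentially formal once I establish nestedness of the shadows, and the Busemann value then follows by locating the vertex where the geodesic ray toward $\omega$ branches off the segment $[x_0,x]$.

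First I would record that the shadows along $[x_0,x]$ form a decreasing chain. Since $x_k$ lies on $[x_0,x_{k+1}]$, any ray containing $[x_0,x_{k+1}]$ also contains $[x_0,x_k]$; hence $\Omega_{x_{k+1}}\subseteq\Omega_{x_k}$, and with the convention $\Omega_{x_0}=\Omega$ (consistent with $\nu(\Omega_{x_0})=1/|S_0|=1$) I get
\[
\Omega=\Omega_{x_0}\supseteq\Omega_{x_1}\supseteq\cdots\supseteq\Omega_{x_n}=\Omega_x.
\]
For a nested chain the telescoping identity $\Omega=\Omega_{x_n}\sqcup\bigsqcup_{k=0}^{n-1}(\Omega_{x_k}\setminus\Omega_{x_{k+1}})$ holds: the pieces are pairwise disjoint because $E_j\subseteq\Omega_{x_{j+1}}^{c}$ while $E_k\subseteq\Omega_{x_{j+1}}$ for $k>j$, and they cover $\Omega$ since every $\omega$ lies in $\Omega_{x_0}$ and either belongs to $\Omega_{x_n}=E_n$ or admits a largest index $k<n$ with $\omega\in\Omega_{x_k}$, placing it in $E_k$. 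This is exactly the claimed partition. (Each $E_k$ is moreover nonempty, as the tree has degree $q+1\ge 3$ so at each $x_k$ there are $q-1$ directions leaving $[x_0,x]$, but this is not needed for the statement.)

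For the Busemann value I would exploit the combinatorial meaning of $E_k$. For $0\le k<n$, membership $\omega\in\Omega_{x_k}\setminus\Omega_{x_{k+1}}$ means precisely that $[x_0,\omega)$ contains $[x_0,x_k]$ but not $x_{k+1}$; since the tree has unique geodesics, the ray and the segment $[x_0,x]$ agree exactly on $[x_0,x_k]$ and diverge at $x_k$, so $x_k$ is the confluent (branch) point of $[x_0,\omega)$ and $[x_0,x]$ (for $k=n$ the ray contains all of $[x_0,x]$ and the branch point is $x_n=x$). Writing $\omega_t$ for the point of $[x_0,\omega)$ at distance $t$ from $x_0$, I would compute the two distances entering $\beta_\omega(x_0,x)=\lim_{t\to\infty}\big(d(x_0,\omega_t)-d(x,\omega_t)\big)$: clearly $d(x_0,\omega_t)=t$, while the geodesic from $x=x_n$ to $\omega_t$ runs back along $[x_0,x]$ to the branch point $x_k$ and then outward along the ray, giving $d(x,\omega_t)=(n-k)+(t-k)=t+n-2k$. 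Hence $\beta_\omega(x_0,x)=t-(t+n-2k)=2k-n$, uniformly in $\omega\in E_k(x)$ and independent of $t$, which is the asserted value (the case $k=n$ giving $2n-n=n$, as it should).

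The only genuinely delicate point is the sign bookkeeping for $\beta$: the value $2k-n$ forces the convention $\beta_\omega(x_0,x)=\lim_t\big(d(x_0,\omega_t)-d(x,\omega_t)\big)$, compatible with the normalization $P(g^{-1},\omega)=q^{\beta_\omega(x_0,gx_0)}$ fixed earlier, whereas the opposite convention would flip the sign to $n-2k$. Everything else is an immediate consequence of the tree being a tree, so that distances decompose through the branch point; no analysis or measure theory is required. I expect this lemma to be used afterwards to integrate $q^{\beta_\omega(x_0,x)}$ over $\Omega$ by summing over the level sets $E_k(x)$, whose measures are already known from the defining property of $\nu$.
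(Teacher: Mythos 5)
Your proof is correct, and it is worth noting that the paper itself offers no proof of this lemma at all: the statement is presented as evident from the tree structure, illustrated only by Figure \ref{Dessin2}, and then used immediately in the measure computation (\ref{E_k(x) measure}) and the Harish-Chandra calculation. Your argument supplies exactly the missing details in the natural way: the nesting $\Omega = \Omega_{x_0} \supseteq \Omega_{x_1} \supseteq \cdots \supseteq \Omega_{x_n}$ makes the partition a formal telescoping, and the identification of $x_k$ as the branch point of $[x_0,\omega)$ against $[x_0,x]$ (using uniqueness of geodesics in a tree, so that a ray containing $x_j$ for $j>k$ would be forced to contain $x_{k+1}$) gives $d(x_0,\omega_t)=t$ and $d(x,\omega_t)=t+n-2k$, hence $\beta_\omega(x_0,x)=2k-n$. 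Your explicit check of the sign convention $\beta_\omega(x_0,x)=\lim_t\big(d(x_0,\omega_t)-d(x,\omega_t)\big)$ is a genuine service, since the paper never defines $\beta$; one can verify it is the right choice because it makes $P(g^{-1},\omega)=q^{\beta_\omega(x_0,gx_0)}$ equal to $q$ when $gx_0$ moves one step toward $\omega$ (matching the Radon--Nikodym computation for $\nu$) and reproduces the standard Harish-Chandra value $\Xi(n)=\big(1+\frac{q-1}{q+1}n\big)q^{-n/2}$ computed in Section \ref{computing H Ch}.
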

\begin{figure}[t!] 
    \centering
    \includegraphics[width=0.8\linewidth]{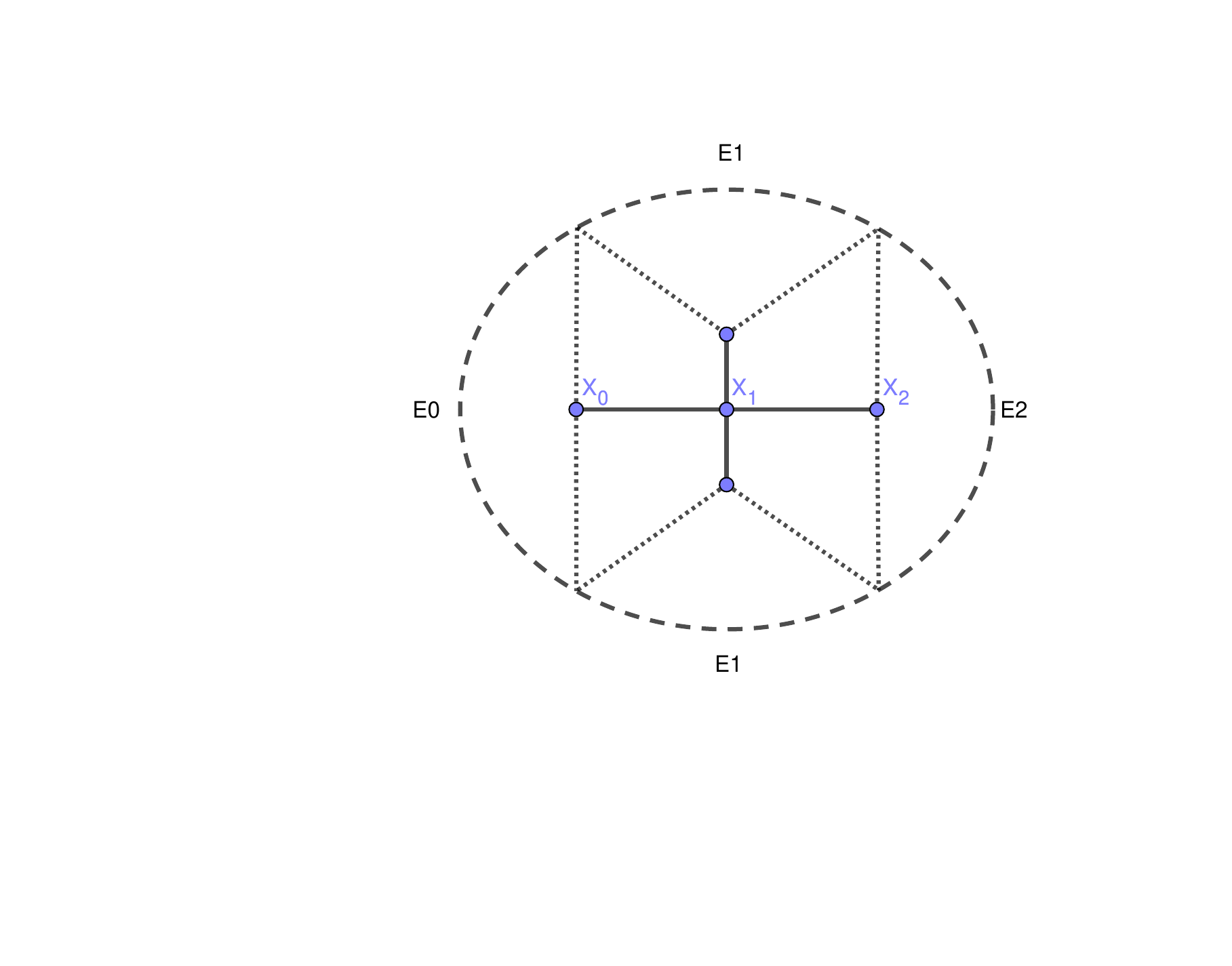}
    \caption{Example with $[x_0,x]=(x_0,x_1,x_2)$ \\
     illustrating Lemma \ref{partition of finite geodesic} where the family $(E_0,E_1,E_2)$ is a partition of the boundary.}
    \label{Dessin2}
\end{figure}

One computes:
\begin{equation}\label{E_k(x) measure}
    \nu(E_k(x))=\begin{cases}
        \nu(\Omega_x)=\frac{1}{(q+1)q^{n-1}}  &($if $ k=n)\\
    \nu(\Omega_{x_k})-\nu(\Omega_{x_{k+1}})=\frac{q-1}{(q+1)q^k} $ $&($if $ k \in [1,n-1])\\
    1 -\nu(\Omega_{x_1})=\frac{q}{q+1}  &($if $ k=0)
    \end{cases}
\end{equation}

\subsection{Computation: The Harish-Chandra function is spherical.}
Here we define $\Xi$ and show that it is constant on $S_n$ by computing its value.
\begin{equation} \nonumber
    \begin{array}{cccc}
        \Xi: & G & \to & \R^{+*}\\
         & g & \mapsto & \langle\pi(g)1_{\Omega},1_{\Omega}\rangle
    \end{array}
\end{equation}
Note that $\Xi(0):=\Xi(e)=1$\newline
Now, let $n \geq 1$, and $g$ such that $g \in S_n$ and compute \newline
\begin{align}\nonumber
    \Xi(g) & = \int_{ \Omega}\pi(g)1_{\Omega}\overline{1_{\Omega}}d\nu = \int_{ \Omega}P(g^{-1},\omega)^{\frac{1}{2}}d\nu(\omega) = \sum_{k=0}^n \int_{E_k(gx_0)}q^{\frac{\beta_{\omega}(x_0,gx_0)}{2}}d\nu(\omega)\\ \nonumber
    & = \sum_{k=0}^nq^{\frac{2k-n}{2}}\nu(E_k(gx_0)) \qquad ( \text{by Lemma } \ref{partition of finite geodesic})\\ \nonumber
    & = q^{-\frac{n}{2}}\frac{q}{q+1} + \sum_{k=1}^{n-1}q^{\frac{2k-n}{2}}\frac{q-1}{(q+1)q^k}+q^{\frac{n}{2}}\frac{1}{(q+1)q^{n-1}} \quad ( \text{from Equation } \ref{E_k(x) measure})\\ \nonumber
    & = \bigg[1+\big(\frac{q-1}{q+1}\big)n\bigg]q^{-\frac{n}{2}}.
\end{align}

So, $\Xi$ is spherical and we can define:
\begin{equation} \nonumber
    \begin{array}{ccccc}
        \Xi: & \N & \to & \R^{+*} &\\
         & n & \mapsto & \Xi(g) & ( \text{where } g \text{ is any element of  } S_n)
    \end{array}
\end{equation}

\section{$\pi$ is c-tempered.}\label{pi c-temepered}
Here we will prove that $\pi$ is c-tempered, in the sense of Kazhdan and Yom Din in section 2 \guillemotleft Notion of c-temperedness\guillemotright \cite[def 2.1]{kazhdan2022tempered}. \newline
For all subset $L \subset G$ and all $\psi_1, \psi_2 \in \mathcal{H}$, one can define the quantity:
\begin{equation}\nonumber
    M_{\psi_1,\psi_2}(L):=\int_{L}|\langle\pi(g)\psi_1,\psi_2\rangle|^2d\mu(g)
\end{equation}
In particular, we have
\begin{equation}\nonumber
    M_{1_{\Omega},1_{\Omega}}(L)= \int_{L}\Xi(g)^2d\mu(g) 
\end{equation}
Using the spherical property of $\Xi$, one computes:
\begin{equation}\label{average Chandra on sphere}
    M_{1_{\Omega},1_{\Omega}}(S_k)=\frac{q+1}{q}\left(1+\frac{q-1}{q+1}k\right)^2 \quad (k>0).
\end{equation}
and
\begin{align} \label{average Chandra on ball}
    M_{1_{\Omega},1_{\Omega}}(B_n) & = \sum_{k=0}^n M_{1_{\Omega},1_{\Omega}}(S_k)\\ \nonumber
    &= 1 + \frac{q+1}{q}\bigg[\frac{n(n+1)(2n+1)}{6}\left(\frac{q-1}{q+1}\right)^2 + n(n+1)\frac{q-1}{q+1} + n\bigg] \\ \nonumber
    &\underset{n \to \infty}{\sim} \frac{n^3}{K} \text{ where } K= \frac{3q(q+1)}{(q-1)^2}.
\end{align}
Considering the sequence $\{B_n\}_{n \in \N}$ of balls in $G$ and our unit vector $1_{\Omega} \in \mathcal{H}$. Then if the two following conditions are satisfied:
\begin{equation}\label{cTemp 1}
    \forall\psi_1,\psi_2 \in \mathcal{H}, \quad \underset{n \to \infty}{\limsup }\frac{M_{\psi_1,\psi_2}(B_n)}{M_{1_{\Omega},1_{\Omega}}(B_n)}<\infty;
\end{equation}
\begin{equation}\label{cTemp 2}
    \forall g,h \in G \text{ and }\psi_1,\psi_2 \in \mathcal{H}, \quad \underset{n \to \infty}{\lim}\frac{M_{\psi_1,\psi_2}(B_n\Delta h^{-1}B_ng)}{M_{1_{\Omega},1_{\Omega}}(B_n)} = 0.
\end{equation}
One says that $\pi$ is c-tempered with F{\o}lner sequence $\{B_n\}_{n \in \N}$ (see \cite[def 2.1]{kazhdan2022tempered}).
\begin{rema}
   The condition (\ref{cTemp 2}) is equivalent to the second condition in the definition \cite[def 2.1]{kazhdan2022tempered} because $G$ is discret, therefore its compacts are the finite sets.
\end{rema}

\begin{lemma}\label{Lemma c tempered 1}
    $\pi$ satisfies the first condition ($\ref{cTemp 1}$).
\end{lemma}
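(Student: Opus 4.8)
The plan is to reduce the statement to a single estimate on each sphere, namely a bound of the form
\begin{equation}\nonumber
M_{\psi_1,\psi_2}(S_k) \leq C^2(k+1)^2\|\psi_1\|^2\|\psi_2\|^2
\end{equation}
valid for a universal constant $C$ and all $k \in \N$, $\psi_1,\psi_2 \in \mathcal{H}$. Granting this, one sums over $k$ to get $M_{\psi_1,\psi_2}(B_n) = \sum_{k=0}^n M_{\psi_1,\psi_2}(S_k) \leq C^2\|\psi_1\|^2\|\psi_2\|^2\sum_{k=0}^n(k+1)^2$, so that $M_{\psi_1,\psi_2}(B_n) = O(n^3)$. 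Comparing with the asymptotics $M_{1_\Omega,1_\Omega}(B_n)\sim n^3/K$ from Equation \ref{average Chandra on ball}, the ratio in (\ref{cTemp 1}) stays bounded, its $\limsup$ being at most $\frac{C^2 K}{3}\|\psi_1\|^2\|\psi_2\|^2<\infty$.

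To prove the sphere estimate I would combine two facts about $\pi$ and the left regular representation $\lambda$ of $G$ on $\ell^2(G)$. First, since the action of $G$ on $(\Omega,\nu)$ is amenable (Kuhn \cite{kuhn1994amenable}), the boundary representation $\pi$ is weakly contained in $\lambda$, as recorded in \cite{boyer2017ergodic}; equivalently $\pi$ factors through the reduced $C^*$-algebra, so that
\begin{equation}\nonumber
\|\pi(f)\|_{\mathcal{B}(\mathcal{H})} \leq \|\lambda(f)\|_{C^*_r(G)} \qquad \text{for every finitely supported } f:G \to \C,
\end{equation}
where $\pi(f):=\sum_{g}f(g)\pi(g)$. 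Second, Haagerup's inequality (property RD, \cite{haagerup1978example}) provides a constant $C$ such that $\|\lambda(f)\|_{C^*_r(G)}\leq C(k+1)\|f\|_{\ell^2(G)}$ whenever $f$ is supported on $S_k$. Chaining these two bounds gives $\|\pi(f)\|_{\mathcal{B}(\mathcal{H})}\leq C(k+1)\|f\|_{\ell^2}$ for every $f$ supported on a sphere.

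The sphere estimate then follows from a test-function trick. Fix $\psi_1,\psi_2$, set $c(g):=\langle\pi(g)\psi_1,\psi_2\rangle$ and take $f(g):=\overline{c(g)}\,1_{S_k}(g)$. On the one hand $\langle\pi(f)\psi_1,\psi_2\rangle=\sum_{g \in S_k}|c(g)|^2=M_{\psi_1,\psi_2}(S_k)$ and $\|f\|_{\ell^2}=M_{\psi_1,\psi_2}(S_k)^{1/2}$; on the other hand $|\langle\pi(f)\psi_1,\psi_2\rangle|\leq\|\pi(f)\|\,\|\psi_1\|\,\|\psi_2\|\leq C(k+1)\|f\|_{\ell^2}\|\psi_1\|\|\psi_2\|$. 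Combining these and dividing by $M_{\psi_1,\psi_2}(S_k)^{1/2}$ (the bound being trivial when this vanishes) yields exactly $M_{\psi_1,\psi_2}(S_k)^{1/2}\leq C(k+1)\|\psi_1\|\|\psi_2\|$, which is the claimed sphere bound after squaring.

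The only genuinely delicate point is the first input, the weak containment $\pi\prec\lambda$ (the temperedness of the boundary representation); everything downstream is a formal manipulation. Here I would lean on the amenability of the boundary action together with the explicit description of $\pi$ in \cite{boyer2017ergodic,kuhn1994amenable}, which is also precisely the place where the forthcoming proof that $\pi$ is $c$-tempered draws its analytic content.
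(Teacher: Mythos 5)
Your proposal is correct and follows essentially the same route as the paper: weak containment of $\pi$ in the regular representation (via Kuhn's amenability of the boundary action), Haagerup's inequality for functions supported on spheres, and the same test-function trick with $f_k(g)=1_{S_k}(g)\overline{\langle\pi(g)\psi_1,\psi_2\rangle}$, giving $M_{\psi_1,\psi_2}(S_k)\lesssim (k+1)^2\|\psi_1\|^2\|\psi_2\|^2$ before summing over spheres and comparing with the computed asymptotics of $M_{1_\Omega,1_\Omega}(B_n)$. The only cosmetic difference is that the paper compares sphere-by-sphere with $M_{1_\Omega,1_\Omega}(S_k)$ and then sums, whereas you sum first and compare on balls; you also handle the degenerate case $M_{\psi_1,\psi_2}(S_k)=0$ explicitly, which the paper glosses over.
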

\begin{proof}
By \cite{kuhn1994amenable}, we know that $\pi$ is weakly contained in the regular representation $\pi_{reg}$.
In particular, the extensions on $l^1(G)$ of these representations:
\begin{equation}\nonumber
    \rho^{ext}(\sum_{g \in G}a_g\delta_g)=\int_Ga_g\rho(g) dg=\sum_{g \in G}a_g\rho(g) 
\end{equation} 
(where $\rho \in \{\pi, \pi_{reg}\}$) satisfy:
\begin{equation}\nonumber
    \forall f \in l^1(G), \quad \|\pi^{ext}(f)\|_{op}\leq \|\pi_{reg}^{ext}(f)\|_{op}.
\end{equation}
Moreover, by \cite{haagerup1978example}, we have:
\begin{equation}\nonumber
    \|\pi_{reg}^{ext}(f)\|_{op} \leq \sum_{k=0}^{\infty}(k+1)\|f1_{S_k}\|_2.
\end{equation}
Consider the sequence of $l^2(G)$ functions $\{f_k\}_k$ defined as
\begin{equation}\nonumber
    f_k(g):=1_{S_k}(g)\overline{\langle\pi(g)\psi_1, \psi_2\rangle} \text{ where } \psi_1,\psi_2 \in \mathcal{H}.
\end{equation}
One has, fixing arbitrary unitary $\psi_1,\psi_2 \in \mathcal{H}$:
\begin{align}\nonumber
    0 &< M_{\psi_1,\psi_2}(S_k) =\langle \pi^{ext}(f_k)\psi_1,\psi_2 \rangle \\ \nonumber
    &\leq \|\pi^{ext}(f_k)\|_{op}\|\psi_1\|_2\|\psi_2\|_2 \leq \|\pi_{reg}^{ext}(f_k)\|_{op} \\ \nonumber
    &\leq (k+1)\|f_k\|_2.
\end{align}
and since $\|f_k\|_2 = M_{\psi_1,\psi_2}(S_k)^{\frac{1}{2}}$, we obtain
\begin{equation}\nonumber
    M_{\psi_1,\psi_2}(S_k) \leq (k+1)^2.
\end{equation}
So, by equation (\ref{average Chandra on sphere}),
\begin{equation}\nonumber
    \frac{M_{\psi_1,\psi_2}(S_k)}{M_{1_{\Omega},1_{\Omega}}(S_k)}\leq \frac{q}{q+1}u_k^2 \quad \left(\text{ for } u_k:=\frac{1+k}{1+\frac{q-1}{q+1}k}\right).
\end{equation}
But one can easily check that $(u_k)_{k \in \N}$ is bounded. So there is a $C \in \R^+$ (which does not depend on $k$) such that:
\begin{equation}\label{bound coeff sphere}
    \forall \psi_1, \psi_2 \in \mathcal{H} , \quad M_{\psi_1,\psi_2}(S_k)\leq CM_{1_{\Omega},1_{\Omega}}(S_k).
\end{equation}
Hence,
\begin{equation}\nonumber
    M_{\psi_1,\psi_2}(B_n)=\sum_{k=0}^nM_{\psi_1,\psi_2}(S_k)\leq \sum_{k=0}^nCM_{1_{\Omega},1_{\Omega}}(S_k)=CM_{1_{\Omega},1_{\Omega}}(B_n).
\end{equation}
\end{proof}

\begin{lemma}
    $\pi$ satisfies the second condition ($\ref{cTemp 2}$).
\end{lemma}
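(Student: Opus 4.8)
The plan is to exploit the fact that, because left and right translation by fixed group elements perturbs word length only by a bounded amount, the symmetric difference $B_n \Delta h^{-1}B_n g$ is confined to a \emph{thin annulus} of spheres near $S_n$. Set $R := |h| + |g|$ (a constant, since $g,h$ are fixed). Since the word length is subadditive and inversion-invariant, I first claim the two containments
\begin{equation}\nonumber
    B_{n-R} \subseteq B_n \cap h^{-1}B_n g \qquad\text{and}\qquad B_n \cup h^{-1}B_n g \subseteq B_{n+R}.
\end{equation}
For the first, if $|y| \le n-R$ then $y \in B_n$ trivially, and $x := hyg^{-1}$ satisfies $|x| \le |h| + |y| + |g| \le R + (n-R) = n$, so $y = h^{-1}xg \in h^{-1}B_n g$. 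For the second, any $x \in B_n$ has $|x| \le n \le n+R$, and any $h^{-1}xg$ with $|x| \le n$ has $|h^{-1}xg| \le R + |x| \le n+R$. Consequently
\begin{equation}\nonumber
    B_n \Delta h^{-1}B_n g \subseteq B_{n+R} \setminus B_{n-R} = \bigsqcup_{k=n-R+1}^{n+R} S_k,
\end{equation}
a disjoint union of only $2R$ spheres.

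Next I would combine this with the per-sphere bound obtained while proving Lemma \ref{Lemma c tempered 1}, namely $M_{\psi_1,\psi_2}(S_k) \le (k+1)^2$ (equivalently the comparison \eqref{bound coeff sphere}). Since $M_{\psi_1,\psi_2}(\cdot)$ is a sum of nonnegative terms, it is monotone under inclusion of subsets of $G$, so
\begin{equation}\nonumber
    M_{\psi_1,\psi_2}\big(B_n \Delta h^{-1}B_n g\big) \le \sum_{k=n-R+1}^{n+R} M_{\psi_1,\psi_2}(S_k) \le \sum_{k=n-R+1}^{n+R}(k+1)^2 \le 2R\,(n+R+1)^2,
\end{equation}
which is $O(n^2)$ because $R$ does not depend on $n$. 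On the other hand, by \eqref{average Chandra on ball} the denominator satisfies $M_{1_{\Omega},1_{\Omega}}(B_n) \sim n^3/K$. Dividing, the ratio in \eqref{cTemp 2} is $O(n^{-1})$ and hence tends to $0$, which is exactly the desired conclusion.

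The only genuinely substantive step is the geometric containment of the symmetric difference in the annulus $B_{n+R}\setminus B_{n-R}$: this is the point where the ``almost-invariance'' of the ball sequence under the two-sided translation $B_n \mapsto h^{-1}B_n g$ is used, and it is what makes $\{B_n\}$ behave like a F\o lner sequence for this polynomially weighted count even though $G$ is non-amenable. Everything afterward is a matter of inserting the polynomial bounds already established, so I expect no further difficulty once the annulus estimate is in place. One mild point to keep in mind is that the comparison must be the sphere-level bound $M_{\psi_1,\psi_2}(S_k)\le(k+1)^2$ rather than the ball-level bound, since the numerator is supported on a range of spheres whose indices grow like $n$; this is why the $O(n^2)$ versus $O(n^3)$ gap is precisely of order $n$.
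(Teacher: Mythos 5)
Your proof is correct and takes essentially the same route as the paper: the annulus containment $B_n \Delta h^{-1}B_n g \subseteq B_{n+R}\setminus B_{n-R}$ with $R=|g|+|h|$ (which the paper states with $k$ in place of $R$ and leaves as ``easily shown''), followed by the sphere-level quadratic bound from the proof of Lemma \ref{Lemma c tempered 1} and the cubic asymptotics (\ref{average Chandra on ball}) for the denominator, giving $O(n^{-1})$ decay. The only cosmetic difference is that you sum $(k+1)^2$ directly (valid up to the factor $\|\psi_1\|^2\|\psi_2\|^2$ for non-unit vectors, which is harmless here) instead of passing through (\ref{bound coeff sphere}) and the difference of ball sums as the paper does.
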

\begin{proof}
    Let $k:=|g|+|h|$\newline
    One can easily show that:
    \begin{equation}\nonumber
        B_n \Delta h^{-1}B_ng \subset B_{n+k}-B_{n-k}
    \end{equation}
    and 
    \begin{align}\nonumber
       & M_{\psi_1,\psi_2}(B_n\Delta h^{-1}B_ng)\leq
        M_{\psi_1,\psi_2}(B_{n+k}-B_{n-k})=\sum_{j=n-k+1}^{n+k} M_{\psi_1,\psi_2}(S^j) \\ \nonumber
        &\overset{\text{ (by \ref{bound coeff sphere})}}{\leq} C\sum_{j=n-k+1}^{n+k} M_{1_{\Omega},1_{\Omega}}(S^j) 
        =C\big(M_{1_{\Omega},1_{\Omega}}(B_{n+k})-M_{1_{\Omega},1_{\Omega}}(B_{n-k})\big)
    \end{align}
    
    Recalling the computation done in (\ref{average Chandra on ball}) which gives:
    \begin{equation}\nonumber
        M_{1_{\Omega},1_{\Omega}}(B_{m})\underset{m\to \infty}{\sim}\frac{m^3}{K}
    \end{equation}
    So
    \begin{align} \nonumber
        &\frac{M_{\psi_1,\psi_2}(B_n\Delta h^{-1}B_ng)}{M_{1_{\Omega},1_{\Omega}}(B_n)}\\ \nonumber
        &\leq C \frac{M_{1_{\Omega},1_{\Omega}}(B_{n+k})-M_{1_{\Omega},1_{\Omega}}(B_{n-k})}{M_{1_{\Omega},1_{\Omega}}(B_n)}\underset{n \to \infty}{\sim}C\frac{(n+k)^3-(n-k)^3}{n^3}.
    \end{align}
    The left hand side converging to zero since the degree 3 coefficient of the numerator vanishes.
\end{proof}

\section{Asymptotic Schur's orthogonality relations for $\pi$.}\label{Asymptotic Schur's orthogonality relations}
Now that we have the conditions ($\ref{cTemp 1}$) and ($\ref{cTemp 2}$) for our representation, we can detail the proof of proposition 2.3 found in \cite{kazhdan2022tempered}. Namely, in our case, for all $\psi_1,\psi_2,\psi_3$ and $\psi_4 \in \mathcal{H}$:
\begin{equation} \label{asymptotic Schur Eq}
    \underset{n \to \infty}{\lim}\frac{\int_{B_n}\langle\pi(g)\psi_1,\psi_2\rangle\overline{\langle\pi(g)\psi_3,\psi_4\rangle}d\mu(g)}{M_{1_{\Omega},1_{\Omega}}(B_n)} = \langle\psi_1,\psi_3\rangle\overline{\langle\psi_2,\psi_4\rangle}.
\end{equation}
We denote by $\overline{\mathcal{H}}$ the conjugate of our vector space $\mathcal{H}$.
This allows us to see any sesquilinear form of $\mathcal{H}$ (like $\langle\pi(g)\cdot,\cdot\rangle, \forall g \in G$) to be a bilinear one on $\mathcal{H}\times \overline{\mathcal{H}}$. 
\begin{lemma}\label{bilinear are multiple}
    Let $B : \mathcal{H}\times \overline{\mathcal{H}} \to \C$ be a bounded bilinear form such that\\
     $B(\pi(g)\psi_1,\pi(g)\psi_3)=B(\psi_1,\psi_3) \quad \forall g \in G$ and $\psi_1,\psi_3 \in \mathcal{H}$.
     Then 
     \begin{equation}\nonumber
         B \in \C\langle\cdot,\cdot\rangle.
     \end{equation}
     In other words, there is a constant $\lambda$ in $\C$ such that $B=\lambda\langle\cdot,\cdot\rangle$.
\end{lemma}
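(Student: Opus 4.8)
The plan is to convert the bilinear statement into a statement about bounded *operators* on $\mathcal{H}$, where I can invoke Schur's lemma in the form already recorded in Equation~\eqref{intertwiners of pi}. The bridge is the Riesz representation theorem: since $B$ is a bounded bilinear form on $\mathcal{H}\times\overline{\mathcal{H}}$, for each fixed $\psi_1\in\mathcal{H}$ the map $\psi_3\mapsto B(\psi_1,\psi_3)$ is a bounded antilinear functional on $\mathcal{H}$ (linear on $\overline{\mathcal{H}}$), so there is a unique vector, defining a bounded operator $T\in\mathcal{B}(\mathcal{H})$, such that $B(\psi_1,\psi_3)=\langle T\psi_1,\psi_3\rangle$ for all $\psi_1,\psi_3$. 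Boundedness of $B$ gives boundedness of $T$.

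Let me think about the steps in order. First I would carry out the Riesz step to produce $T$ with $B(\psi_1,\psi_3)=\langle T\psi_1,\psi_3\rangle$. Second, I would translate the invariance hypothesis $B(\pi(g)\psi_1,\pi(g)\psi_3)=B(\psi_1,\psi_3)$ into an intertwining relation for $T$: rewriting the left side as $\langle T\pi(g)\psi_1,\pi(g)\psi_3\rangle=\langle \pi(g)^{*}T\pi(g)\psi_1,\psi_3\rangle$ and using unitarity $\pi(g)^{*}=\pi(g)^{-1}=\pi(g^{-1})$, the hypothesis becomes $\langle\pi(g^{-1})T\pi(g)\psi_1,\psi_3\rangle=\langle T\psi_1,\psi_3\rangle$ for all $\psi_1,\psi_3$, hence $\pi(g^{-1})T\pi(g)=T$, i.e. $T\pi(g)=\pi(g)T$ for all $g\in G$. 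Third, I would apply Equation~\eqref{intertwiners of pi}: since $T\in Hom_G(\pi,\pi)=\C\,Id_{\mathcal{H}}$, there is $\lambda\in\C$ with $T=\lambda\,Id_{\mathcal{H}}$. Substituting back gives $B(\psi_1,\psi_3)=\langle\lambda\psi_1,\psi_3\rangle=\lambda\langle\psi_1,\psi_3\rangle$, which is exactly $B=\lambda\langle\cdot,\cdot\rangle$.

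The only genuinely delicate point is bookkeeping the conjugation built into $\overline{\mathcal{H}}$: one must check that $\psi_3\mapsto B(\psi_1,\psi_3)$ is antilinear as a map on $\mathcal{H}$ (equivalently linear on $\overline{\mathcal{H}}$), so that Riesz applies and $T$ is genuinely \emph{linear} rather than antilinear, and that the resulting $T$ is the \emph{same} operator for the intertwining computation to make sense. I expect this to be the main obstacle in the sense that it is where sign/conjugation errors creep in; the rest is a clean application of unitarity and Schur's lemma. An alternative that avoids the conjugate-space bookkeeping is to treat $B$ as a genuine sesquilinear form on $\mathcal{H}\times\mathcal{H}$, realize it via Riesz as $\langle T\cdot,\cdot\rangle$, and run the same intertwining argument; I would likely present whichever framing keeps the conjugation transparent.
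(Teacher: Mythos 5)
Your proposal is correct and follows essentially the same route as the paper: Riesz representation to realize $B(\psi_1,\psi_3)=\langle T\psi_1,\psi_3\rangle$ with $T$ bounded and linear (the conjugation bookkeeping on $\overline{\mathcal{H}}$ you flag is exactly what the paper handles), then translating the invariance of $B$ into $T\pi(g)=\pi(g)T$ via unitarity, and concluding $T\in\C\,Id_{\mathcal{H}}$ by the Schur's lemma statement in Equation~(\ref{intertwiners of pi}). The only cosmetic difference is that the paper verifies linearity and the intertwining relation by showing certain norms vanish, whereas you derive the intertwining identity directly from $\pi(g)^{*}=\pi(g^{-1})$; these are equivalent computations.
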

\begin{proof}
    For all $\psi_1$ in $\mathcal{H}$, Riesz lemma gives us an element $T(\psi_1)$ such that $B(\psi_1,\cdot) = \langle T(\psi_1), \cdot \rangle$. \newline
    This defines a map $T:\mathcal{H}\to\C$ which is linear and bounded. Indeed:
    \begin{align}\nonumber
        &\|\underbrace{T(\lambda \psi_1 + \Psi_1) -\lambda T(\psi_1) -T(\Psi_1)}_{=: \psi}\|^2\\ \nonumber
        & =B(\lambda \psi_1 + \Psi_1,\psi) -\lambda B(\psi_1,\psi) -B(\Psi_1,\psi) = 0.
    \end{align}
    and,
    \begin{equation} \nonumber
        \|T(\psi_1)\| \leq \|B\|\|\psi_1\|,
    \end{equation}
    Moreover, $T$ is an intertwining operator since:
    \begin{equation}\nonumber
        \|\underbrace{(T\circ \pi(g)-\pi(g)\circ T)\psi_1}_{=:\psi}\|^2
        = B(\pi(g)\psi_1,\psi) -B(\psi_1,\pi(g^{-1})\psi) = 0.
    \end{equation}
    Hence, by irreducibility of $\pi$ and the application of Schur's Lemma mentioned in (\ref{intertwiners of pi}) implies that $T \in \C Id_{\mathcal{H}}$ and $B=\langle T(\cdot),\cdot\rangle \in \C \langle\cdot,\cdot\rangle$.
\end{proof}
\begin{rema} \label{bilinear are multiple reverse}
    If $D:\overline{\mathcal{H}}\times \mathcal{H} \to \C$ is a bounded bilinear form such that\\
     $\forall g \in G$ and $\psi_2,\psi_4 \in \mathcal{H}, \quad D(\pi(g)\psi_2,\pi(g)\psi_4)=B(\psi_2,\psi_4)$.\\
     Then, composing it with the flip operator $F$ which swap the coordinates,
     we obtain that $B:= D \circ F$ satisfies the conditions of the previous lemma (\ref{bilinear are multiple}). So
     $D \circ F \in \C \langle\cdot,\cdot\rangle$, that is to say $D \in \C \overline{\langle\cdot,\cdot\rangle}$.
\end{rema}
One last simple lemma ( about convergence in $\C$) before proving the equation (\ref{asymptotic Schur Eq}) mentioned at the beginning of the section:
\begin{lemma} \label{converg subSeq}
    Let $(u_n)_{n \in \N} \in \C^{\N}$ and $l \in \C$. Then
    \begin{equation}\nonumber
        \underset{n \to \infty}{\lim}u_n = l \text{ is equivalent to the following condition:}
    \end{equation}
     \begin{align}\nonumber 
     &\text{For all subsequences } (u_{\alpha(n)})_{n \in \N}, \text{ there is a further subsequence } (u_{\alpha \circ \gamma(n)})_{n \in \N}\\ \nonumber
     &\text{ such that } \underset{n \to \infty}{\lim}u_{\alpha \circ \gamma(n)} = l.
     \end{align}
\end{lemma}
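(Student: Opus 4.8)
The plan is to prove the two implications separately, the forward one being essentially immediate and the reverse one by contraposition.

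For the direct implication, assume $\lim_{n\to\infty} u_n = l$. I would first recall the standard fact that if a sequence in $\C$ converges to $l$, then every one of its subsequences converges to the same limit: indeed, if $\alpha:\N\to\N$ is strictly increasing then $\alpha(n)\geq n$, so any tail estimate $|u_m - l|<\varepsilon$ valid for $m\geq N$ is inherited by $(u_{\alpha(n)})_n$ as soon as $n\geq N$. Given any subsequence $(u_{\alpha(n)})_n$, it therefore already converges to $l$, and it suffices to take the trivial further extraction $\gamma=\mathrm{id}_{\N}$ to meet the stated condition.

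For the reverse implication, I would argue by contradiction. Suppose the subsequence condition holds but $u_n$ does not converge to $l$. Negating the definition of convergence yields some $\varepsilon>0$ together with a strictly increasing map $\alpha:\N\to\N$ such that $|u_{\alpha(n)}-l|\geq \varepsilon$ for every $n$. Applying the hypothesis to this particular subsequence $(u_{\alpha(n)})_n$ produces a further extraction $\gamma$ with $u_{\alpha\circ\gamma(n)}\to l$. But each term $u_{\alpha\circ\gamma(n)}$ is one of the $u_{\alpha(m)}$, hence still satisfies $|u_{\alpha\circ\gamma(n)}-l|\geq \varepsilon$; letting $n\to\infty$ then gives $0\geq\varepsilon$, which is absurd. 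Therefore $u_n\to l$, completing the equivalence.

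I do not expect any genuine obstacle here: the lemma merely repackages the two elementary facts that subsequences of a convergent sequence share its limit, and that failure of convergence produces a subsequence bounded away from the candidate limit. The only point deserving minimal care is writing the negation of $\lim_{n\to\infty} u_n=l$ correctly, so as to extract the subsequence $(u_{\alpha(n)})_n$ staying at distance at least $\varepsilon$ from $l$; the rest is a direct contradiction argument.
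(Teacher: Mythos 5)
Your proof is correct and complete: the forward direction via the standard fact that subsequences inherit the limit (using $\alpha(n)\geq n$), and the reverse direction by extracting, from the negation of convergence, a subsequence staying at distance at least $\varepsilon$ from $l$ and contradicting the hypothesis. The paper itself states this lemma without any proof, treating it as a standard fact, so your argument supplies exactly the routine verification the author omitted and there is no competing approach to compare it against.
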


\begin{theo}
    Given any $\psi_1,\psi_2,\psi_3$ and $\psi_4 \in \mathcal{H}$:
\begin{equation}  \nonumber\label{asymptotic Schur Eq THM}
    \underset{n \to \infty}{\lim}\frac{\int_{B_n}\langle\pi(g)\psi_1,\psi_2\rangle\overline{\langle\pi(g)\psi_3,\psi_4\rangle}d\mu(g)}{M_{1_{\Omega},1_{\Omega}}(B_n)} = \langle\psi_1,\psi_3\rangle\overline{\langle\psi_2,\psi_4\rangle} .
\end{equation}
\end{theo}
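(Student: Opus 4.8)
The plan is to regard the normalized integral as the value of a family of bounded quadrilinear forms and to identify its limit through the structural lemmas already established, following the scheme of \cite{kazhdan2022tempered}. For each $n$ I set
\begin{equation}\nonumber
\Phi_n(\psi_1,\psi_2,\psi_3,\psi_4):=\frac{1}{M_{1_{\Omega},1_{\Omega}}(B_n)}\int_{B_n}\langle\pi(g)\psi_1,\psi_2\rangle\overline{\langle\pi(g)\psi_3,\psi_4\rangle}\,d\mu(g),
\end{equation}
which is linear in $\psi_1,\psi_4$ and conjugate-linear in $\psi_2,\psi_3$. By Lemma \ref{converg subSeq} applied to the scalar sequence $u_n:=\Phi_n(\psi_1,\psi_2,\psi_3,\psi_4)$, it suffices to prove that every subsequence admits a further subsequence along which $\Phi_n$ converges to $\langle\psi_1,\psi_3\rangle\overline{\langle\psi_2,\psi_4\rangle}$. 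So I fix an arbitrary subsequence and extract a good one from it.

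First I would establish uniform boundedness. Cauchy--Schwarz gives
\begin{equation}\nonumber
|\Phi_n(\psi_1,\psi_2,\psi_3,\psi_4)|\leq\frac{M_{\psi_1,\psi_2}(B_n)^{1/2}M_{\psi_3,\psi_4}(B_n)^{1/2}}{M_{1_{\Omega},1_{\Omega}}(B_n)},
\end{equation}
and summing the sphere estimate (\ref{bound coeff sphere}) yields $M_{\psi_i,\psi_j}(B_n)\leq CM_{1_{\Omega},1_{\Omega}}(B_n)$ with $C$ independent of $n$ and of the unit vectors, so that $\sup_n\|\Phi_n\|<\infty$. Since $\Omega$ is compact metrizable, $\mathcal{H}=L^2(\Omega,\nu)$ is separable; choosing a countable dense subset and extracting diagonally (using the sequential compactness input of \cite{reed1981functional}), I obtain along a subsequence the convergence of $\Phi_n$ at every quadruple drawn from that dense set, and the uniform bound (equicontinuity) promotes this to convergence at every quadruple of $\mathcal{H}$ toward a bounded quadrilinear form $\Phi$.

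Next I would verify that $\Phi$ is $\pi$-invariant separately in the pair $(\psi_1,\psi_3)$ and in the pair $(\psi_2,\psi_4)$. Replacing $(\psi_1,\psi_3)$ by $(\pi(h)\psi_1,\pi(h)\psi_3)$ and substituting $g\mapsto gh$ turns the domain $B_n$ into $B_nh$, so the difference is an integral over $B_n\Delta B_nh=B_n\Delta e^{-1}B_nh$; similarly, replacing $(\psi_2,\psi_4)$ by $(\pi(h)\psi_2,\pi(h)\psi_4)$ and substituting $g\mapsto h^{-1}g$ produces an integral over $B_n\Delta h^{-1}B_n=B_n\Delta h^{-1}B_ne$. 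In both cases Cauchy--Schwarz bounds the difference, after division by $M_{1_{\Omega},1_{\Omega}}(B_n)$, by products of terms of the form $\bigl(M_{\cdot,\cdot}(B_n\Delta h^{-1}B_ng)/M_{1_{\Omega},1_{\Omega}}(B_n)\bigr)^{1/2}$, which vanish by condition (\ref{cTemp 2}). Hence $\Phi(\pi(h)\psi_1,\psi_2,\pi(h)\psi_3,\psi_4)=\Phi(\psi_1,\psi_2,\psi_3,\psi_4)$ and $\Phi(\psi_1,\pi(h)\psi_2,\psi_3,\pi(h)\psi_4)=\Phi(\psi_1,\psi_2,\psi_3,\psi_4)$. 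Fixing $\psi_2,\psi_4$, the bounded invariant form $(\psi_1,\psi_3)\mapsto\Phi(\psi_1,\psi_2,\psi_3,\psi_4)$ equals $\lambda(\psi_2,\psi_4)\langle\psi_1,\psi_3\rangle$ by Lemma \ref{bilinear are multiple}; evaluating at $\psi_1=\psi_3=1_{\Omega}$ exhibits $\lambda$ as a bounded invariant form on $\overline{\mathcal{H}}\times\mathcal{H}$, so Remark \ref{bilinear are multiple reverse} gives $\lambda(\psi_2,\psi_4)=c\,\overline{\langle\psi_2,\psi_4\rangle}$ for some $c\in\C$, whence $\Phi(\psi_1,\psi_2,\psi_3,\psi_4)=c\,\langle\psi_1,\psi_3\rangle\overline{\langle\psi_2,\psi_4\rangle}$.

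Finally I would fix the constant and conclude. Testing at $\psi_1=\psi_2=\psi_3=\psi_4=1_{\Omega}$ gives $\Phi_n(1_{\Omega},1_{\Omega},1_{\Omega},1_{\Omega})=M_{1_{\Omega},1_{\Omega}}(B_n)/M_{1_{\Omega},1_{\Omega}}(B_n)=1$ for every $n$, while the identified limit equals $c\langle1_{\Omega},1_{\Omega}\rangle\overline{\langle1_{\Omega},1_{\Omega}\rangle}=c$ since $\nu(\Omega)=1$; hence $c=1$. As the extracted subsequential limit is thus always $\langle\psi_1,\psi_3\rangle\overline{\langle\psi_2,\psi_4\rangle}$, independently of the chosen subsequence, Lemma \ref{converg subSeq} delivers the full convergence. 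The main obstacle is the second step: producing a genuine bounded limiting form to which the structural lemmas apply requires combining the uniform estimate coming from (\ref{bound coeff sphere}) with separability and an equicontinuity argument, and then the invariance check forces one to apply the F{\o}lner condition (\ref{cTemp 2}) in its two distinct incarnations (with $h=e$ on the left, and with $g=e$ on the right) while tracking carefully how the substitutions move the domain $B_n$.
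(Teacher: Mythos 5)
Your proposal is correct and follows essentially the same route as the paper's proof: uniformly bounded normalized quadrilinear forms, subsequential pointwise limits, asymptotic $\pi\times\pi$-invariance of the limit via condition (\ref{cTemp 2}), identification through Lemma \ref{bilinear are multiple} and Remark \ref{bilinear are multiple reverse}, normalization at $\psi_i=1_{\Omega}$, and the subsequence criterion of Lemma \ref{converg subSeq}. The only deviations are implementational: you derive the uniform bound directly from (\ref{bound coeff sphere}) where the paper routes through the bilinear maps $S_n$ into $L^2(G,\mu)$ and Banach--Steinhaus, and you replace the paper's appeal to Banach--Alaoglu by an equivalent diagonal extraction over a countable dense set (checking invariance one pair at a time rather than jointly, which comes to the same thing).
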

\begin{proof}
   Define $\forall n \in \N$:
    \begin{equation}\nonumber
        S_n: \mathcal{H} \times \overline{\mathcal{H}} \to L^2(G,\mu), \text{ by}
    \end{equation}
    \begin{equation}\nonumber
        S_n(\psi_1,\psi_2)(g):= 1_{B_n}\frac{\langle \pi(g)\psi_1, \psi_2\rangle}{M_{1_{\Omega},1_{\Omega}}(B_n)^{\frac{1}{2}}}.
    \end{equation}
    Then $S_n$ is clearly bilinear and, since $\pi$ is c-tempered, we have by the first condition of c-temperedness (\ref{cTemp 1}),
    \begin{equation}\nonumber
        \underset{n \to \infty}{\limsup}\int_{\omega}|S_n(\psi_1,\psi_2)(g)|^2d\mu(g) = \underset{n \to \infty}{\limsup}\frac{M_{\psi_1,\psi_2}(B_n)}{M_{1_{\Omega},1_{\Omega}}(B_n)}<\infty.
    \end{equation}
    This shows that $(S_n(\psi_1,\psi_2))_{n \in \N}$ bounded in $L^2(G)$ and,
    by the Banach-Steinhaus theorem, 
    \begin{equation} \label{maj Sn}
        \exists C \in \R^+ \text{ such that }\|S_n\|^2 \leq C.
    \end{equation}
    Now, $\forall n \in \N$, one can define the quadrilinear form
    \begin{align}\nonumber
        &Q_n:\mathcal{H} \times \overline{\mathcal{H}} \times \overline{\mathcal{H}}\times \mathcal{H} \to \C \text{ as:}\\ \nonumber
        &Q_n(\psi_1,\psi_2,\psi_3,\psi_4) := \langle S_n(\psi_1,\psi_2),S_n(\psi_3,\psi_4)\rangle.
    \end{align}
    $(Q_n)_{n \in \N}$ is also uniformly bounded, since, using Cauchy-Schwartz inequality and equation (\ref{maj Sn}), we see, for unitary $\psi_1, \psi_2, \psi_3$ and $\psi_4$:
    \begin{equation}\nonumber
        \|Q_n(\psi_1,\psi_2,\psi_3,\psi_4)\|\leq \|S_n(\psi_1,\psi_2)\|\|S_n(\psi_3,\psi_4)\|\leq C.
    \end{equation}
    Next we will show that, $\forall g,h \in G$, we have:
    \begin{equation}\label{Qn asymptotic  pi X pi invariant}
        \underset{n \to \infty}{\lim}\bigg(Q_n\big(\pi(g)\psi_1,\pi(h)\psi_2,\pi(g)\psi_3,\pi(h)\psi_4\big)-Q_n\big(\psi_1,\psi_2,\psi_3,\psi_4\big)\bigg)=0.
    \end{equation}
    Indeed, one can show that:
    \begin{align} \nonumber
        &\left|Q_n\big(\pi(g)\psi_1,\pi(h)\psi_2,\pi(g)\psi_3,\pi(h)\psi_4\big)-Q_n\big(\psi_1,\psi_2,\psi_3,\psi_4\big)\right|\\ \nonumber
        &\leq \bigg(\frac{M_{\psi_1,\psi_2}(B_n\Delta h^{-1}B_ng)}{M_{1_{\Omega},1_{\Omega}}(B_n)}\bigg)^\frac{1}{2}\bigg(\frac{M_{\psi_3,\psi_4}(B_n\Delta h^{-1}B_ng)}{M_{1_{\Omega},1_{\Omega}}(B_n)}\bigg)^\frac{1}{2}
    \end{align}
     which tends to $0$ when $n$ goes to infinity since $\pi$  is c-tempered (see condition 2 of c-temperedness (\ref{cTemp 2})).\newline
    Now, One can consider any subsequence $(Q_{\alpha(n)})_{n \in \N}$ and an application of the Banach-Alaoglu theorem in $\overline{B_{\|\|}(0,C)}$ gives us a further subsequence $(Q_{\alpha \circ \gamma(n)})_{n \in \N}$ which converges point-wise to some $Q$ in $\overline{B_{\|\|}(0,C)}$:\newline
    \begin{equation}\label{Qn has a converg subs}
       \forall \psi_1,\psi_2,\psi_3,\psi_4, \quad Q\big(\psi_1,\psi_2,\psi_3,\psi_4\big) =\underset{n \to \infty}{\lim}Q_{\alpha \circ \gamma(n)}\big(\psi_1,\psi_2,\psi_3,\psi_4\big) \text{ in } \C.
    \end{equation}
     Now, by (\ref{Qn asymptotic  pi X pi invariant}), we have:
    \begin{align}\nonumber
        &Q\big(\pi(g)\psi_1,\pi(h)\psi_2,\pi(g)\psi_3,\pi(h)\psi_4\big)
        =Q\big(\psi_1,\psi_2,\psi_3,\psi_4\big)\\ \nonumber
        &\text{and } Q \text{ is also quadrilinear.}
    \end{align}
In particular, if we fix $(\psi_2, \psi_4) \in \overline{\mathcal{H}} \times \mathcal{H}$ and consider the map
\begin{equation}\nonumber
    Q(\cdot,\psi_2,\cdot,\psi_4) \text{ (which satisfies the conditions of lemma \ref{bilinear are multiple})}.
\end{equation}
We know that, $\exists \lambda_{\psi_2,\psi_4} \in \C$ such that: 
\begin{equation}\nonumber
    Q(\cdot,\psi_2,\cdot,\psi_4) = \lambda_{\psi_2,\psi_4}\langle\cdot,\cdot\rangle.
\end{equation}
Similarly, fixing $(\psi_1, \psi_3) \in \mathcal{H} \times \overline{\mathcal{H}}$ and by the remark \ref{bilinear are multiple reverse} following the mentioned lemma, we have $\exists \lambda^{\psi_1,\psi_3} \in \C$ such that:
\begin{equation}\nonumber
    Q(\psi_1,\cdot,\psi_3,\cdot) = \lambda^{\psi_1,\psi_3}\overline{\langle\cdot,\cdot\rangle}.
\end{equation}
Next, using the definition of these coefficient one calculate $\lambda_{1_{\Omega},1_{\Omega}}=1$, $\lambda^{\psi_1,\psi_3}=\langle\psi_1,\psi_3\rangle$ and
$Q(\psi_1,\psi_2,\psi_3,\psi_4)=\lambda^{\psi_1,\psi_3}\overline{\langle\psi_2,\psi_4\rangle}=\langle\psi_1,\psi_3\rangle\overline{\langle\psi_2,\psi_4\rangle}$.\\
To finish the proof,  Let $ \psi_1,\psi_2,\psi_3,\psi_4 \in \mathcal{H}$ and define:
    \begin{equation}\nonumber
        u_n := \frac{\int_{B_n}\langle\pi(g)\psi_1,\psi_2\rangle\overline{\langle\pi(g)\psi_3,\psi_4\rangle}d\mu(g)}{M_{1_{\Omega},1_{\Omega}}(B_n)}
    \end{equation}
    and consider any subsequence $(u_{\alpha(n)})_{n \in \N}$.\newline
    Since   $u_{\alpha(n)} = Q_{\alpha(n)}(\psi_1,\psi_2,\psi_3,\psi_4)$, has further subsequence converging to $\langle\psi_1,\psi_3\rangle\overline{\langle\psi_2,\psi_4\rangle}$, we have by the lemma (\ref{converg subSeq}), $\underset{n \to \infty}{\lim}u_n = \langle\psi_1,\psi_3\rangle\overline{\langle\psi_2,\psi_4\rangle}$.
\end{proof}
\begin{rema}
    A unitary representation which satisfies these orthogonality relations has to be irreducible.
\end{rema}

So, using the equation result (\ref{asymptotic Schur Eq}) of our last theorem, and the computation of $M_{1_{\Omega},1_{\Omega}}(B_n)$ we did in (\ref{average Chandra on ball}), we obtain the concrete result mentioned at the beginning of this paper:
\begin{equation} \nonumber
      \underset{n \to \infty}{\lim}\frac{1}{n^{3}}\int_{B_n}\langle\pi(g)\psi_1,\psi_2\rangle\overline{\langle\pi(g)\psi_3,\psi_4\rangle}d\mu(g) = \frac{3q(q+1)}{(q-1)^2}\langle\psi_1,\psi_3\rangle\overline{\langle\psi_2,\psi_4\rangle}.
\end{equation}

\bibliography{main}

\end{document}